\newcommand{\HRule}{\rule{\linewidth}{0.2mm}}
\newtheorem{theorem}{Theorem}[section]
\newtheorem{prop}[theorem]{Proposition}
\theoremstyle{definition}
\newtheorem{definition}[theorem]{Definition}
\theoremstyle{remark}
\newtheorem{remark}[theorem]{Remark}
\numberwithin{equation}{section}
\begin{document}

\title[On the Aicardi-Juyumaya bracket for tied links]{On the Aicardi-Juyumaya bracket for tied links}
%\title{On the Aicardi-Juyumaya bracket\\ for tied links}

%    Only Author and Address are required; other information is
%    optional.  Remove any unused author tags.

%    author one information
% Author[short version for running head]{name for top of paper}
\author{O'Bryan Cárdenas-Andaur}
\address{Departamento de Álgebra, Facultad de Matemáticas, Instituto de Matemáticas (IMUS), Universidad de Sevilla, Av. Reina Mercedes s/n, 41012 Sevilla, Spain}
\curraddr{}
\email{obryan.cardenas@uv.cl}
\thanks{}

%    \subjclass is required.
%\subjclass[2020]{Primary }

\date{\today}

\dedicatory{}

%    Abstract is required.
\begin{abstract}
Aicardi and Juyumaya generalized the Kauffman bracket of classical links to the setting of tied links by introducing the double bracket $\langle\langle\cdot\rangle\rangle$. However, the analogues of Kauffman's states and their relationship to this invariant are not immediately clear. We address this question by defining the \textit{Aicardi-Juyumaya states}, and show that the contribution of each AJ-state to $\langle\langle\cdot\rangle\rangle$ does not depend on the chosen resolution tree. We also present an algorithm to compute the double bracket of a tied link diagram and use it to find pairs of examples of (oriented) tied links sharing the same Homflypt polynomial but different tied Jones polynomial.
\end{abstract}

\maketitle

\section{Introduction}

The Jones polynomial is a link invariant discovered by Vaughan Jones in 1985. This construction is based on a braid group representation over the Temperley-Lieb algebra and a Markov trace over it \cite{Jones1985}. 

On the other hand, Kauffman provided in 1987 a combinatorial definition of the Jones polynomial through the so-called \textit{Kauffman’s method} (see \cite[Theorem 2.6]{Kauffman1987}). For this, he defined what is now known as the Kauffman bracket $\langle \cdot \rangle$, which assigns a polynomial to a link diagram and can be defined as follows:

\begin{align}
\label{suma-estados}
\langle D \rangle=\sum_{s}A^{c-2r}(-A^2-A^{-2})^{k-1},
\end{align}

\noindent where the sum is taken over all Kauffman states $s$ of $D$, $c$ is the number of crossings in $D$, $r$ is the number of 1-smoothings in the state $s$ and $k$ is the number of circles in the smoothed diagram associated to $s$. If we write $w(D)$ for the writhe of a diagram $D$ representing a link $L$, the Jones polynomial can be recovered as
\begin{align*}
    V(L)=(-A)^{-3w(D)}\langle D \rangle.
\end{align*}

Khovanov introduced the first homological invariant, currently known as Khovanov homology, as a categorification of Jones polynomial \cite{Khovanov2000} with stronger properties; for example, it is known that Khovanov homology is an unknot detector \cite{Kronheimer}, while this is an open question for Jones polynomial. In \cite{Viro2004} Viro nicely reinterpreted Khovanov complex in a purely combinatorial way, with generators consisting of enhanced Kauffman states, and differentials defined in terms of adjacency between states. 

Tied links are a generalization of classical links introduced in \cite{Aicardi2016}. 
Every invariant for tied link leads to an invariant for classical links; in fact, tied link invariants are, in general, stronger in the sense that they can distinguish links that are not distinguished by their classical counterpart. % Homflypt polynomial nor the $2$-variable Kauffman polynomial, see \cite{Aicardi2016, Aicardi2018, Aicardi2018-2,Aicardi2020}.

One of those invariants is the tied Jones polynomial $\mathcal{J}$, introduced in \cite{Aicardi2018}. It is a $2$-variable polynomial that coincides with the Jones polynomial when specialized to classical links. In this sense, the invariant $\mathcal{J}$ not only generalizes the Jones polynomial for tied links but it is also constructed using Kauffman's method, thereby extending \cite[Theorem 2.6]{Kauffman1987}. In a similar way to that of the classical setting, given a tied link $L$ represented by a diagram $D$ one can write
\begin{align*}
    \mathcal{J}(L)=(-A)^{-3w(D)}\langle\langle D \rangle\rangle,
\end{align*}

\noindent where $\langle\langle\cdot\rangle\rangle$ denotes the \textit{Aicardi-Juyumaya bracket}, a generalization of the (classical) Kauffman bracket. However, a similar expression to (\ref{suma-estados}) for $\langle\langle\cdot\rangle\rangle$ is still unknown. 

Given the importance of Khovanov homology and the demonstrated utility of tied link theory in developing stronger invariants, one can expect to build a new version of Khovanov homology for the family of tied links by categorifying the invariant $\mathcal{J}$ (and therefore the AJ-bracket) in a similar way as Viro did. The first step to achieve this would be to understand the differences between the AJ-bracket and the Kauffman bracket. This could seem easy, but this is not the case. For example, there is not a well-defined counterpart of Kauffman states in the case of a tied link diagram, since the leaves (and their number) in a resolution tree of a tied link are not preserved when modifying the order in which crossings are smoothed. In fact, the problem is more subtle: given a tied link diagram, the construction of a (useful) resolution tree is not straightforward, since some smoothings do not decrease the number of crossings and, therefore, a crossing might be smoothed more than once.  

In this paper, we address this problem and introduce \textit{AJ-states} as the counterpart of Kauffman states for the classical setting. Moreover, we prove that given a tied link diagram, the contribution of each AJ-state to the AJ-bracket does not depend on the chosen resolution tree. 

The structure of the paper is as follows: In Section 2 we present tied links and the Aicardi-Juyumaya bracket $\langle\langle\cdot\rangle\rangle$. In Section 3 we introduce AJ-states and prove the main result. We also present an algorithm allowing to compute the AJ-bracket of a given tied link diagram. In Section 4 we discuss some examples showing the strength of the Aicardi-Juyumaya bracket. 

\section{Tied links and Aicardi-Juyumaya bracket}

\subsection{Tied links}

Aicardi and Juyumaya introduced tied links in \cite{Aicardi2016}, as a non-trivial generalization of classical links. These links can be approached from both a topological \cite{Aicardi2016} and an algebraic perspective \cite{Aicardi2018}. In this paper, we will adopt the latter perspective.

\begin{definition}
A tied link is a pair $(L, P)$, where $L$ denotes a classical link and $P$ represents a partition of its components. 
\end{definition}

Denote the set of unoriented tied links by $\widetilde{\mathfrak{L}}$, and write $\mathfrak{L}$ for the set of unoriented classical links. We can see $\mathfrak{L} \subset \widetilde{\mathfrak{L}}$ if we consider a classical link as a tied link where all components belong to the same partition set.

The reason why these objects are called \textit{tied links} is that originally, the components belonging to the same partition were connected by ties, visually represented by dashed lines, as shown in Figure~\ref{examples}.

\begin{figure}[H]
\centering
\includegraphics[height=3.3cm]{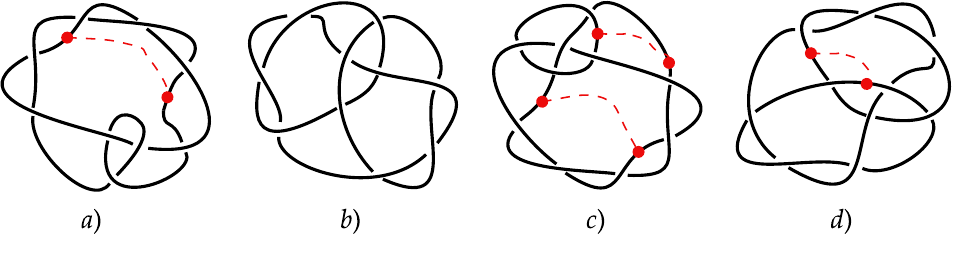}
\caption{Examples of tied link diagrams.}
\label{examples}
\end{figure}

In this paper, we think of a tied link as a colored link whose components are given the same color if and only if they belong to the same set in the partition as shown in Figure~\ref{ties-colores}. Notice that for a tied link $L$ of $\mu(L)$ components, the number of required colors is at most $\mu(L)$. Given a tied link with $m$ partition sets (colors), we can denote the colors as $1,2,\dots,m\in\mathbb{Z}$.

\begin{figure}[H]
\centering
\includegraphics[scale=0.65]{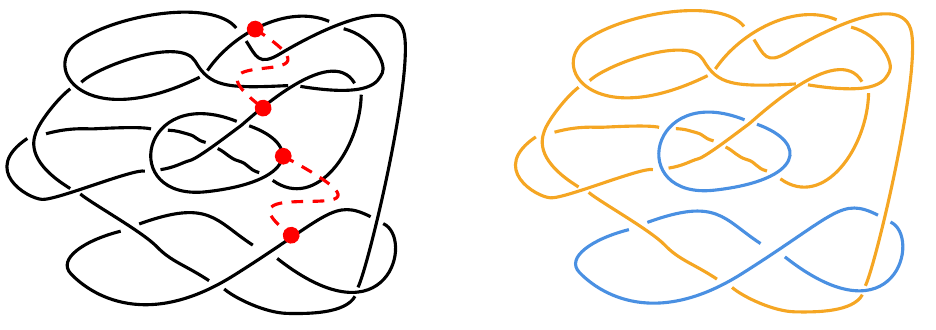}
\caption{Two interpretations of the same tied link diagram.}
\label{ties-colores}
\end{figure}

Taking into account the reinterpretation discussed in the previous paragraph, we can define an equivalence relation based on the definition provided in \cite{Aicardi2016}.

\begin{definition}
Two tied links $(L_1,P_1)$ and $(L_2,P_2)$ are equivalent if:
\begin{enumerate}
\item The links $L_1$ and $L_2$ are ambient isotopic.
\item The ambient isotopy maps $P_1$ to $P_2$.
\end{enumerate}
\end{definition}

A tied link diagram of $(L, P)$ is a projection of $L$ onto $\mathbb{R}^2$, keeping the ties between the components of the projection as they were originally in $(L, P)$.

Two diagrams of a tied link represent the same tied link if it is possible to obtain one from the other through a finite sequence of Reidemeister moves while preserving the colors between the involved components (up to renaming the colors).

\subsection{Aicardi-Juyumaya bracket}

Consider a tied link diagram $D$, and denote by $D\sqcup\bigcirc$ the tied link diagram consisting of the disjoint union of $D$ and the trivial diagram of the unknot whose color does not belong to the set of colors of $D$, and by $D\widetilde{\sqcup}\bigcirc$ the tied link diagram consisting of the disjoint union of $D$ and the trivial diagram of the unknot, whose color coincides with one of the colors in $D$.

The following theorem, proved in \cite{Aicardi2018}, defines a regular isotopy invariant (invariant under Reidemeister moves R2 and R3) for tied links that generalizes the Kauffman bracket for classical links. We will refer to this invariant as the AJ-bracket.

\begin{theorem}\label{invariante}
There exists a unique function
\begin{equation*}
    \langle\langle\cdot\rangle\rangle \ : \widetilde{\mathfrak{L}}\rightarrow\mathbb{Z}[A^{\pm1},c]
\end{equation*}

defined by the following axioms:
\begin{enumerate}
    \item $\langle\langle \bigcirc  \rangle\rangle=1$,
    \item $\langle\langle D \sqcup \bigcirc\rangle\rangle=c\cdot\langle\langle D\rangle\rangle$,
    \item $\langle\langle D \widetilde{\sqcup} \bigcirc\rangle\rangle=-(A^2+A^{-2})\langle\langle D\rangle\rangle$,
    \item $\langle\langle \cdot \rangle\rangle $ is invariant under Reidemeister moves R2 y R3.
    \item $\langle\langle D_{+,\sim} \rangle\rangle =
A\ \langle\langle D_0\rangle\rangle +
 {A^{-1}} \  \langle\langle D_{\infty} \rangle\rangle $,
\item $\langle\langle D_+ \rangle\rangle  +  \langle\langle D_- \rangle\rangle =
\delta\left( \langle\langle D_0 \rangle\rangle +
  \langle\langle D_{\infty} \rangle\rangle \right) $.
\end{enumerate}

Here, $\delta=A+A^{-1}$ and, $D_+, D_-, D_0, D_{\infty}$ and $D_{+,\sim}$ are diagrams differing in the neighborhood of a given crossing, as indicated in Figure~\ref{multidiagram}, where we distinguish whether the two arcs' components belong to the same block (same colors), or to different blocks (different colors).

\begin{figure}[H]
  \centering
  \includegraphics[height=2.5cm]{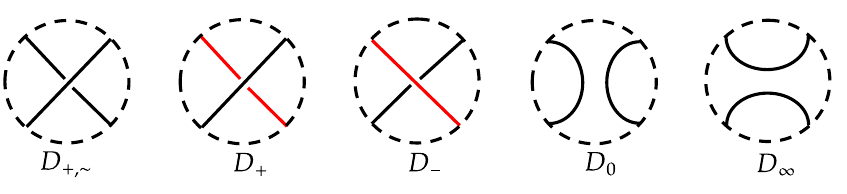}
  \caption{Local diagrams illustrating axioms of Theorem~\ref{invariante}.}
  \label{multidiagram}
\end{figure}
\end{theorem}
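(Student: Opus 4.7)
The plan is to prove existence and uniqueness separately, following the template of Kauffman's classical proof but adapted to the tie structure. For uniqueness, I would proceed by strong induction on the number of crossings $n$ of a tied link diagram $D$. The base case $n=0$ is handled by axioms $(1)$, $(2)$, and $(3)$: any crossing-free tied link diagram is a disjoint union of circles with color assignments, and adding these circles one at a time yields $\langle\langle D\rangle\rangle = c^{m-1}\bigl(-(A^2+A^{-2})\bigr)^{k-m}$, where $k$ is the number of circles and $m$ is the number of distinct colors. For the inductive step, pick any crossing of $D$: if it is a same-color crossing, axiom $(5)$ immediately expresses $\langle\langle D\rangle\rangle$ as a $\mathbb{Z}[A^{\pm 1},c]$-linear combination of $\langle\langle D_0\rangle\rangle$ and $\langle\langle D_\infty\rangle\rangle$, each with $n-1$ crossings, and the induction closes.

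The delicate case is a different-color crossing, because axiom $(6)$ only yields a relation between $\langle\langle D_+\rangle\rangle$ and $\langle\langle D_-\rangle\rangle$, not the value of either individually. My plan is to iterate axiom $(6)$ at further different-color crossings, which yields a system of linear equations over the $2^d$ diagrams obtained by independently switching the $d$ different-color crossings of $D$. Combined with axiom $(4)$ (invariance under R2 and R3) and the R1-type transformation rule derivable from axiom $(5)$ for same-color kinks, this system should suffice to pin down each $\langle\langle D_\pm\rangle\rangle$ uniquely. The main obstacle, as foreshadowed in the introduction, is that switching a different-color crossing does not reduce the crossing count; consequently, one must isolate a non-obvious monovariant---perhaps the lexicographic pair given by the number of different-color crossings followed by the total number of crossings, after a normalization step---to guarantee termination of the recursion.

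For existence, I would construct $\langle\langle D\rangle\rangle$ by running the algorithm implicit in the uniqueness argument and then verify that the output satisfies all six axioms. Well-definedness reduces to a local confluence check: any two orders of resolving two adjacent crossings must yield the same final expression in $\mathbb{Z}[A^{\pm 1},c]$. Invariance under R2 and R3 is then a case-by-case calculation: for each local tangle picture, expand both sides using axioms $(5)$ and $(6)$ and verify that the resulting linear combinations of simpler brackets agree, distinguishing same-color from different-color strands at each step. The coloring makes the number of cases larger than in the classical proof, but each case reduces to a finite polynomial identity in $\mathbb{Z}[A^{\pm 1},c]$ that can be checked directly.
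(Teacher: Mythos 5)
First, note that the paper does not prove Theorem~\ref{invariante} at all: it is quoted from \cite{Aicardi2018}, and what the paper itself supplies (complexity of diagrams, illegal crossings, AJ-states, Theorem~\ref{independencia}, and the Proposition that $P$ coincides with $\langle\langle\cdot\rangle\rangle$) is essentially the state-sum/existence half of such a proof. Your plan is in the same spirit as that machinery, but it has a genuine gap precisely at the point the paper identifies as the crux. Your proposed termination measure --- ``number of different-color crossings followed by the total number of crossings'' --- does not decrease under axiom $(6)$: the diagram $D_-$ obtained by switching a different-color crossing has exactly the same total number of crossings \emph{and} the same number of different-color crossings as $D_+$, so the recursion you describe never terminates on that branch. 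The missing idea is an \emph{asymmetric} convention: order the colors and declare a different-color crossing resolved (legal) exactly when the upper arc carries the larger color index, so that switching $D_+\mapsto D_-$ strictly reduces the number of illegal crossings while the $0$- and $1$-smoothings reduce the total crossing number; this is the lexicographic complexity $(C_T,C_I)$ of the paper. Even this is delicate, because $0$- and $1$-smoothings merge colors, so formerly legal crossings can become illegal (Remark~\ref{low-complexity2}); your sketch does not account for this, and without it the claim that the ``system of linear equations \ldots should suffice to pin down each $\langle\langle D_\pm\rangle\rangle$'' is not an argument. You also need to say how the terminal diagrams are evaluated: after all illegal crossings are gone, the remaining crossings have the higher color always on top, so the color classes can be split apart by R2/R3 and axioms $(2)$--$(4)$ give the value $c^{\gamma-1}(-A^2-A^{-2})^{k-\gamma}$; your appeal to an ``R1-type rule from axiom $(5)$'' is not the relevant tool.

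Second, your existence step underestimates the well-definedness problem. Because switching does not drop the crossing count, the same crossing may be resolved more than once along a branch, and the ``local confluence check'' is not a routine two-crossing computation: it is exactly the content of Theorem~\ref{independencia}, whose hardest cases (e.g.\ (3a) and (3c)) require smoothing one crossing, then the other, then the \emph{first one again} after its colors have merged, and then identifying states such as $s(D_{l(2),r(0),l(\bar 0)})=s(D_{l(1),r(\bar 0)})$. So the overall architecture you propose (resolution trees, confluence, then verification of the axioms for the resulting state sum, with axiom $(4)$ deducible from the others as in \cite{Aicardi2018}) matches the route the paper takes toward $P=\langle\langle\cdot\rangle\rangle$, but as written your proof does not close: the termination monovariant is wrong, and the uniqueness/confluence step at different-color crossings is asserted rather than proved.
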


Axioms $(1)$, $(3)$, $(4)$ and $(5)$ in Theorem~\ref{invariante} are inherited from that of the classical setting.  

Observe that given a tied link diagram $D$, it is not possible to distinguish whether a crossing whose arcs do not share the same color corresponds to a diagram of type $D_+$ or $D_-$. We fix this ambiguity in the following way: assume the colors of $D$ are labelled as $1, 2, \ldots, m$, and consider a crossing whose upper and lower arcs are labelled by $i$ and $j$, respectively. We declare the diagram to be of type $D_+$ for the distinguished crossing if $i<j$; otherwise, the diagram is of type $D_-$. This will become crucial when constructing resolution trees in the next section.

If we have a crossing of type $D_{+,\sim}$, we will denote by $\bar{0}$ and $\bar{1}$, respectively, the smoothings to obtain the diagrams $D_0$ and $D_{\infty}$ when applying axiom $(5)$. On the other hand, if we have a crossing of type $D_{+}$, we will denote by 0, 1, and 2, respectively, the smoothings to obtain the diagrams $D_0$, $D_{\infty}$, and $D_-$ when applying axiom $(6)$. Figure~\ref{smoothing} illustrates this notation:

\begin{figure}[H]
    \centering
    \includegraphics[scale=0.75]{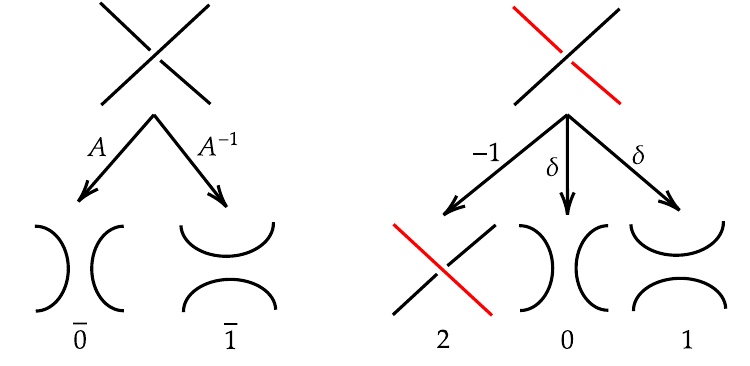}
    \caption{Types of smoothings, asumming color black and red represent colors $i$ and $j$, with $i<j$.}
    \label{smoothing}
\end{figure}

\begin{remark}\label{low-complexity}
Observe that when we apply a smoothing of type $\Bar{0}$, $\Bar{1}$, or $2$, the colors of the involved arcs are preserved. However, when smoothings are of type $0$ or $1$, the colors of the arcs of the distinguished crossings merge into one, that is, the two associated blocks of the partition are joined. We stablish the arcs in $D_0$ and $D_{\infty}$ to inherit the color of lower index, as shown in Figure~\ref{smoothing}. 
\end{remark}

Recall that to compute the Kauffman bracket of a classical link diagram $D$, we can use a \textit{resolution tree}. This involves sequentially taking the crossings of $D$ and applying axiom $(5)$ in Theorem~\ref{invariante}. This process yields two diagrams with one less crossing each time. These diagrams are connected to $D$ with edges (branches) labeled with the coefficient associated with the type of smoothing. By iterating this process, we end up with a (resolution) tree whose leaves are disjoint unions of circles without crossings (the so called Kauffman states). If $D$ had $c$ crossings, any resolution tree of $D$ contains $2^c$ leaves. In order to get the Kauffman bracket of each Kauffman state we can use axioms $(1)$ and $(3)$. Finally, to get the Kauffman bracket polynomial of $D$ we multiply the value of the bracket of each Kauffman state by the product of the labels of the edges connecting it to the root $D$, and take the sum over all leaves:

\begin{align*}
\langle D \rangle=\sum_{s}A^{c-2r}(-A^2-A^{-2})^{k-1},
\end{align*}

Here, $(-A^2-A^{-2})^{k-1}$ is the value of the bracket for each Kauffman state $s$, where $k$ is the number of circles in the smoothed state associated to $s$, and $A^{c-2r}$ is the product of the labels of the edges connecting $s$ to the root, with $c$ the number of crossings of $D$ and $r$ the number of $\overline{1}$-smoothings in the state $s$.

As opposed to the previous case, when performing the resolution tree of a tied link for the AJ-bracket, we have two axioms that we can use depending on the type of crossing. If we use axiom $(5)$, the number of crossings decreases just as in the classical case. However, when using axiom $(6)$, one of the resulting diagrams has the same number of crossings, potentially leading to a nonterminating construction of the resolution tree. In the next section, we will introduce the concept of the complexity of a tied link diagram, which will, in turn, allow us to define the set of AJ-states and ensure that the construction of a resolution tree for a given diagram indeed terminates. %However, if we index the colors by $\lbrace 1,2,3,\dots, m \rbrace$, they have a total order, and we can establish that axiom 6 will only be applied if the crossing colors are such that the smaller color corresponds to the upper arc. In this way, we can provide an informal definition of an analogous to Kauffman states: the AJ-states (named after Aicardi and Juyumaya); these are diagrams corresponding to disjoint and/or overlapping circles, such that if crossings exist, they involve two colors and the larger color corresponds to the upper arc. Finally, we can use axioms 2 and 3, in addition to the Reidemeister move $R2$, to compute the value of an AJ-state. See Definition~\ref{estado final}.

\section{AJ-states and computation of $\langle\langle\cdot\rangle\rangle$}

In this section we introduce AJ-states and prove that, given a tied link diagram $D$, the contribution of each AJ-state to $\langle\langle D\rangle\rangle$ does not depend on the chosen resolution tree. 

\subsection{Complexity of tied link diagrams}

\begin{definition}
Let $D$ be a tied link diagram in which the colors of the components have been indexed by $\lbrace 1,2,3,\dots, m \rbrace$. We define the complexity of $D$ as the pair $(C_T, C_I)$, where $C_T$ corresponds to the total number of crossings of $D$ and $C_I$ corresponds to the number of \textit{illegal crossings}, which can be of two types:

\begin{enumerate}
    \item Type 1: Crossings whose arcs share the same color.
    \item Type 2: Crossings whose arcs are of different colors and satisfy that the color of the upper arc has lower index than that of the lower arc. 
\end{enumerate}  
\end{definition}

Diagram $D$ shown in Figure~\ref{resolution-tree} has $4$ illegal crossings: two of them are of type $2$ (color $2$ (blue) is below color $1$ (black)) and two of them of type $1$ (involving black arcs). Therefore, the complexity of $D$ is $(6,4)$.

Furthermore, since $(C_T, C_I)\in\mathbb{N}^2$, we can endow the set of complexities of tied link diagrams with the lexicographic order.

\begin{definition}\label{estado final}
Given a tied link diagram $D$, the Aicardi-Juyumaya states (AJ-states) of $D$ are all diagrams of complexity $(C_T,0)$ that can be obtained from $D$ by a finite sequence of smoothings of illegal crossings using axioms $(5)$ and $(6)$ in Theorem~\ref{invariante}.
\end{definition}

Observe that, since some smoothings imply a change of colors in the components of the diagram (see Remark~\ref{low-complexity}), it might happen that a crossing that is illegal in $D$ become legal once we smooth other crossing; conversely, a legal crossing in $D$ could become illegal at some step when constructing resolution tree.  

From the above definition, it follows that the AJ-states for tied link diagrams contain and generalize the Kauffman states, adding to these configurations of possibly overlapping circles of distinct colors, such as those named $D_5$ to $D_8$ in Figure~\ref{resolution-tree}.

\begin{figure}[H]
    \centering
    \includegraphics[scale=0.45]{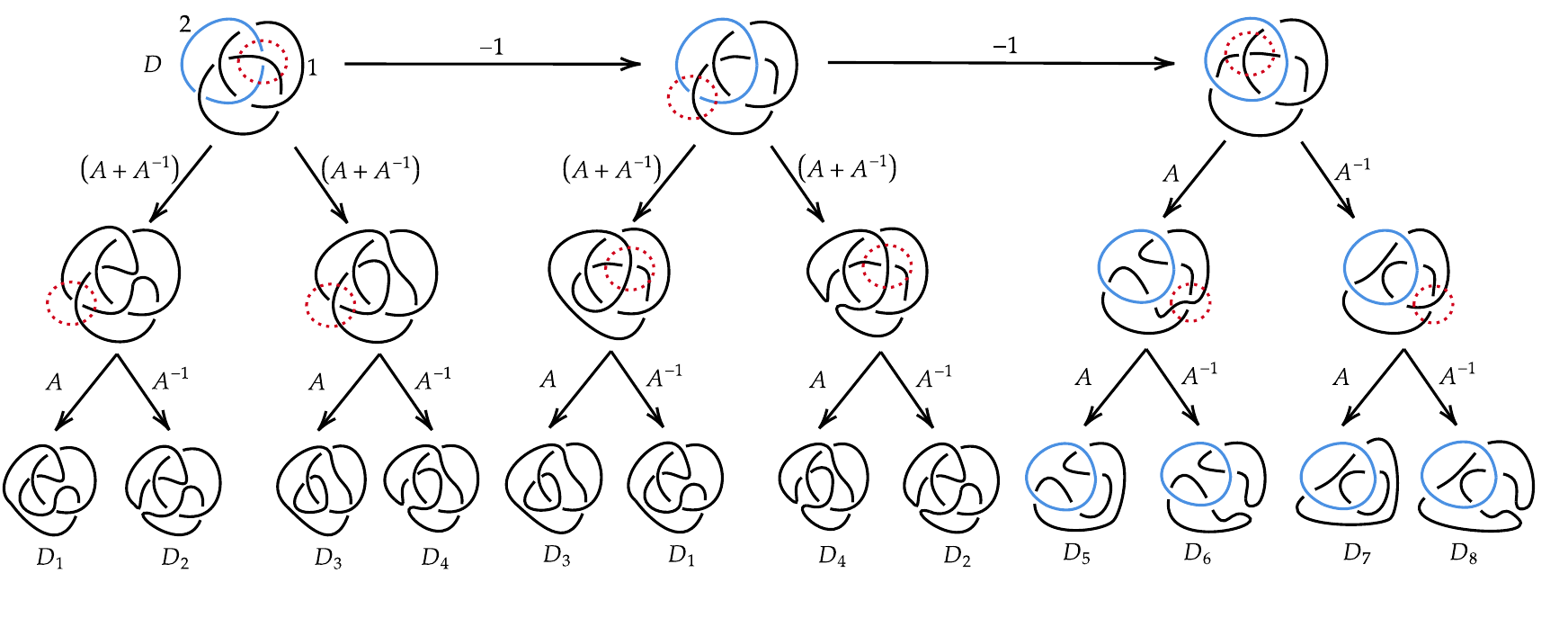}
    \caption{Diagrams $D_5$ to $D_8$ are AJ-states but $D_1$ to $D_4$ are not.}
    \label{resolution-tree}
\end{figure}

The previous figure is an example of an incomplete resolution tree. To complete the resolution tree, one just needs to apply the classical resolution procedure to the diagrams $D_1$ to $D_4$.

\begin{remark}\label{low-complexity2}
From Remark~\ref{low-complexity}, we know that we reduce the complexity of tied link diagrams when using axioms $(5)$ and $(6)$ until reaching the AJ-states. Indeed, we have that once an illegal crossing $c$ in a diagram of complexity $(n,k)$ is smoothed, if it is of type 1, then two diagrams of complexity $(n-1,k-1)$ are obtained. On the other hand, if $c$ is of type 2, a diagram of complexity $(n,k-1)$ and two diagrams of complexity $(n-1,k')$ with $k'\in\mathbb{N}$ are obtained. In both cases, one obtains diagrams with lower complexity. Observe that the change of colors of some components might cause $k'$ to be greater than $k$.
\end{remark}

The importance of AJ-states lies in the fact that they are the leaves of a resolution tree for $D$, and we can calculate the invariant $\langle\langle D\rangle\rangle$ by computing the value associated to each leaf, which is easily done by axioms $(1)$ to $(4)$ in Theorem~\ref{invariante}.

\subsection{Contribution of an $AJ$-state to $\langle\langle D\rangle\rangle$}

Let $D$ be a tied link diagram, and $R$ be a resolution tree with root $D$. Given a vertex of $R$, there exists a unique path from the root to that vertex. This path consists of smoothing the crossings $c_1, \dots, c_k$ of $D$ through smoothings of type $a_1, \dots, a_k$, with $a_i\in\lbrace 0,1,2,\Bar{0},\Bar{1}\rbrace$, respectively. We will denote this vertex as $D_{c_1(a_1),\dots, c_k(a_k)}$ and its associated diagram as $s(D_{c_1(a_1),\dots, c_k(a_k)})$. Notice that distinct vertices could have the same associated diagram. We will idenfity $s(D)$ and $D$. Also, notice that if $v$ is a leaf of $R$, then $s(v)$ is an AJ-state.

Let $S$ be the set of AJ-states of $D$. For each $s\in S$ we define its associated polynomial as $f_R(s)=\displaystyle\sum_{v \in R: s(v)=s} b(v)$, where $b(v)$ is the product of labels of the branches connecting the vertex $v$ to the root of $R$. If $s\neq s(v)$ for every $v\in R$, we set $f_R(s)=0$.

Given a resolution tree $R$, we define $T_R(D)=\displaystyle\sum_{s\in S} f_R(s)\cdot s$ \, as the formal sum of the AJ-states multiplied by their associated polynomials. Furthermore, if $v$ is a vertex of $R$, we will call $R_v$ the subtree with root $v$, and we define $T_R(v)=T_{R_v}(s(v))$.

\begin{theorem}\label{independencia}
Given a tied link diagram $D$, $T_R(D)$ does not depend on the resolution tree $R$, that is, it does not depend on the specific sequence of smoothings of the crossings.
\end{theorem}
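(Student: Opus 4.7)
I would prove the theorem by Noetherian induction on the complexity $(C_T, C_I)$, ordered lexicographically. By Remark~\ref{low-complexity2}, any smoothing of an illegal crossing strictly decreases the complexity, so the induction is well-founded. The base case $C_I = 0$ is immediate: $D$ is itself an AJ-state, the only resolution tree is the singleton $\{D\}$, and $T_R(D) = D$.

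In the inductive step, let $R_1$ and $R_2$ be two resolution trees of $D$. I first consider the case where both begin by smoothing the same illegal crossing: the roots split into the same weighted children, each of strictly smaller complexity, and the inductive hypothesis applied to each subtree yields $T_{R_1}(D) = T_{R_2}(D)$. If instead $R_1$ and $R_2$ begin by smoothing distinct illegal crossings $c_1$ and $c_2$, I construct auxiliary trees $\tilde R_i$ ($i=1,2$) that first smooth $c_i$ and then, in each resulting child, smooth $c_{3-i}$ whenever it remains illegal there (otherwise proceed with any illegal crossing). The previous case and the inductive hypothesis give $T_{R_i}(D) = T_{\tilde R_i}(D)$, reducing the task to proving the \emph{diamond identity} $T_{\tilde R_1}(D) = T_{\tilde R_2}(D)$.

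The diamond identity is verified by a case analysis on the types of $c_1, c_2$ and on the relationship between their colors. The central geometric observation is that smoothings at distinct crossings commute, so the colored diagram produced by choosing a smoothing at each of $c_1, c_2$ depends only on those choices and not on the order. When the types of both crossings are preserved throughout the double smoothing---both Type 1, or one Type 1 and one Type 2, or both Type 2 with disjoint colors---the coefficients accompanying each AJ-state match immediately. The delicate case is when both $c_1, c_2$ are Type 2 with overlapping or identical color pairs, because a $0$- or $1$-smoothing of one crossing merges colors and can change the type of the other from Type 2 to Type 1 (or even to a legal crossing). In particular, when $c_1, c_2$ share the color pair $(i,j)$, the $2$-smoothing of one crossing preserves colors but flips its over/under information, and a subsequent color-merging smoothing of the other turns the first crossing into a $D_{-,\sim}$ configuration that must be resolved once more using the Kauffman coefficients appropriate to a negative Type 1 crossing.

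The main obstacle is the term-by-term verification of the diamond identity in this last sub-case: one must track several cascades of smoothings and confirm that the contributions to each AJ-state cancel correctly across the two orderings, where the cancellation depends crucially on the negative-crossing coefficients arising from the $D_-$-version of axiom~(5). Once the diamond identity is secured, an iterative swap argument---which chains together local exchanges of the order of two consecutive smoothings---transforms any two resolution trees of $D$ into one another while preserving $T_R(D)$ at each step, completing the induction.
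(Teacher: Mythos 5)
Your overall strategy coincides with the paper's: induction on the lexicographically ordered complexity $(C_T,C_I)$, well-founded by Remark~\ref{low-complexity2}; reduction, via the induction hypothesis, to two trees that begin by smoothing the two distinct illegal crossings in the two possible orders; and a case analysis on the types and colors of those crossings to verify the resulting diamond. (Your closing remark about an iterative swap argument is superfluous once the induction is in place, and on its own it would be delicate, since a crossing can be smoothed more than once and the two trees need not be related by transpositions of a fixed smoothing sequence; but it does no harm.)

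The genuine gap is that the case you yourself label ``the main obstacle'' is exactly where the content of the proof lies, and you do not carry it out. In the paper's subcase (3c) (two type-2 crossings $l$, $r$ with upper colors $j$ and $i$ and common lower color $k$, $i<j<k$) the two orders produce structurally different trees: smoothing $l$ first, the $2$-smoothing flips $l$, the subsequent $0$- or $1$-smoothing of $r$ merges $k$ into $i$ and makes the flipped $l$ illegal of type 2 again, so $l$ is smoothed a second time; smoothing $r$ first, the same merge renders $l$ legal, so $l$ is never resolved in that branch. Equality of the two expansions then rests on explicit diagram identifications across branches, e.g.\ $s(D_{l(2),r(0),l(2)})=s(D_{r(0)})$ (a double flip restores the crossing) and $s(D_{l(2),r(0),l(1)})=s(D_{l(0),r(0)})$, together with cancellations coming from $\delta=A+A^{-1}$; similarly, in the equal-color-pair case (3a) one needs identifications such as $s(D_{l(2),r(0),l(\Bar{0})})=s(D_{l(1),r(\Bar{0})})$ after re-smoothing the now type-1 flipped crossing. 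Asserting that ``the contributions to each AJ-state cancel correctly across the two orderings'' is precisely what must be proved, and nothing in your sketch forces these specific matchings of states and coefficients; as written, the proposal reduces the theorem to its hardest step without establishing it, whereas the paper's proof consists essentially of that term-by-term verification.
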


\begin{proof}
To prove the theorem, we will consider two resolution trees $R_1$ and $R_2$ with root $D$, and proceed by induction on the complexity of $D$.

The base case is to consider a diagram of complexity $(0,0)$; in this case, there are no illegal crossings, so $D$ is an AJ-state and the statement holds because $T_{R_1}(D)=D=T_{R_2}(D)$. Suppose that we have a diagram of complexity $(n,k)$ and that for any diagram of lower complexity the statement is true; we have the following possible cases:

If $k=0$, then, by definition, we are dealing with an AJ-state, and $R_1=R_2$, so $T_{R_1}(D)=D=T_{R_2}(D)$.

If $k=1$, we have two cases: the unique illegal crossing $c$ is of type 1 or of type 2. If it is of type 1, then:

\begin{align*}
    T_{R_1}(D)&=AT_{R_1}(D_{c(\Bar{0})})+A^{-1}T_{R_1}(D_{c(\Bar{1})})\\
    T_{R_2}(D)&=AT_{R_2}(D_{c(\Bar{0})})+A^{-1}T_{R_2}(D_{c(\Bar{1})})
\end{align*}

Then, by Remark~\ref{low-complexity2}, $s(D_{c(\Bar{0})})$ and $s(D_{c(\Bar{1})})$ have a lower complexity than $D$, so $T_{R_1}(D_{c(\Bar{0})}))=T_{R_2}(D_{c(\Bar{0})})$ and $T_{R_1}(D_{c(\Bar{1})}))=T_{R_2}(D_{c(\Bar{1})})$ by the induction hypothesis, and therefore $T_{R_1}(D)=T_{R_2}(D)$.

If the illegal crossing is of type 2, the proof is analogous.
    
Now, suppose that $k>1$. Notice that if both resolution trees have the same root $D$ and start by smoothing the same illegal crossing, then we have a case analogous to $k=1$.
If resolution trees start by smoothing distinct illegal crossings, say $l$ and $r$, then we need to consider the possible combinations of types of illegal crossings.

\begin{enumerate}
            \item Crossings $r$ and $l$ are of type 1 and are of different colors: If $R_1$ starts by smoothing the crossing $l$:

\begin{align*}
    T_{R_1}(D) &=AT_{R_1}(D_{l(\Bar{0})})+A^{-1}T_{R_1}(D_{l(\Bar{1})})
\end{align*}

 By the induction hypothesis, to compute $T_{R_1}(D_{l(\Bar{0})})$ and $T_{R_1}(D_{l(\Bar{1})})$ we can replace the subtrees $T_{R_1}(D_{l(\Bar{0})})$ and $T_{R_1}(D_{l(\Bar{1})})$ by other resolution trees of the corresponding vertices. Since $r$ is still an illegal crossing of $D_{l(\Bar{0})}$ and $D_{l(\Bar{1})}$, we can assume that $T_{R_1}(D_{l(\Bar{0})})$ and $T_{R_1}(D_{l(\Bar{1})})$ both start by smoothing $r$. Thus, we obtain

\begin{align*}
    T_{R_1}(D) &=A^2 T_{R_1}(D_{l(\Bar{0}),r(\Bar{0})})+ T_{R_1}(D_{l(\Bar{0}),r(\Bar{1})})+ T_{R_1}(D_{l(\Bar{1}),r(\Bar{0})})+ A^{-2}T_{R_1}(D_{l(\Bar{1}),r(\Bar{1})})
\end{align*}

            Similarly, if $R_2$ starts by smoothing the crossing $r$, we can assume that $T_{R_2}(D_{r(\Bar{0})})$ and $T_{R_2}(D_{r(\Bar{1})})$ both start by smoothing $l$ and we obtain the following.

            \begin{align*}
                T_{R_2}(D)&=AT_{R_2}(D_{r(\Bar{0})})+A^{-1}T_{R_2}(D_{r(\Bar{1})})\\
                &=A^2 T_{R_2}(D_{r(\Bar{0}),l(\Bar{0})})+ T_{R_2}(D_{r(\Bar{0}),l(\Bar{1})})+ T_{R_2}(D_{r(\Bar{1}),l(\Bar{0})})+ A^{-2}T_{R_2}(D_{r(\Bar{1}),l(\Bar{1})})
            \end{align*}

            Now notice that $s(D_{r(i),l(j)})=s(D_{l(j),r(i)})$ for every $i, j \in \lbrace\Bar{0},\Bar{1}\rbrace$, and all these diagrams have smaller complexity than $D$. Hence, by induction hypothesis, $T_{R_1}(D)=T_{R_2}(D)$.

            \begin{figure}[H]
                \centering
                \includegraphics[scale=0.9]{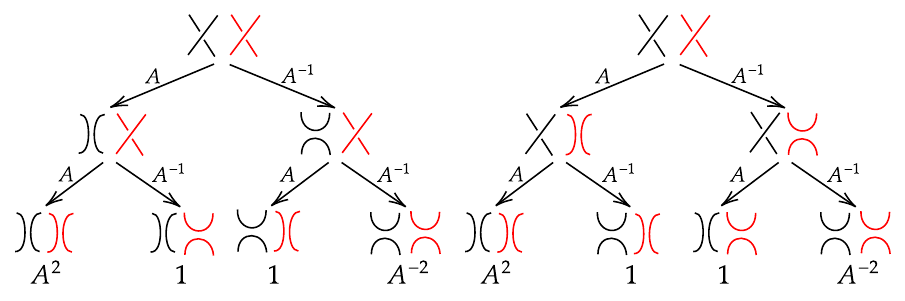}
                \caption{Resolution trees illustrating case (1).}
                \end{figure}

If we consider both illegal crossings of type 1 and they share the same color the procedure is analogous.\\

            \item Crossing $r$ is of type 2 and crossing $l$ is of type 1:\\
            \begin{enumerate}
                \item One arc of $r$ is of the same color as the arcs in $l$: If $R_1$ starts by smoothing the crossing $l$ of type 1:

\begin{align*}
    T_{R_1}(D) &=AT_{R_1}(D_{l(\Bar{0})})+A^{-1}T_{R_1}(D_{l(\Bar{1})})\\
    &=- A  T_{R_1}(D_{l(\Bar{0}),r(2)})+ \delta A  T_{R_1}(D_{l(\Bar{0}),r(0)})+\delta A  T_{R_1}(D_{l(\Bar{0}),r(1)})\\
    & \ \ - A^{-1}  T_{R_1}(D_{l(\Bar{1}),r(2)})+ \delta A^{-1}  T_{R_1}(D_{l(\Bar{1}),r(0)}) +\delta A^{-1}  T_{R_1}(D_{l(\Bar{1}),r(1)})
\end{align*}

We have used the induction hypothesis in the second equality, as $D_{l(\Bar{0})}$ and $D_{l(\Bar{1})}$ have lower complexity than $D$, we can assume that their correlated subtrees start by smoothing $r$, which is still an illegal crossing in these diagrams. Likewise, if $R_2$ starts by smoothing the crossing $r$ of type 2, we obtain the following.

            \begin{align*}
                T_{R_2}(D)&=- T_{R_2}(D_{r(2)}) + \delta T_{R_2}(D_{r(0)}) + \delta T_{R_2}(D_{r(1)})  \\
                &= - A T_{R_2}(D_{r(2),l(\Bar{0})}) - A^{-1} T_{R_2}(D_{r(2),l(\Bar{1})}) \\
                & \ \ +\delta A T_{R_2}(D_{r(0),l(\Bar{0})}) + \delta A^{-1} T_{R_2}(D_{r(0),l(\Bar{1})}) \\ 
                & \ \ + \delta A T_{R_2}(D_{r(1),l(\Bar{0})}) + \delta A^{-1} T_{R_2}(D_{r(1),l(\Bar{1})})
            \end{align*}

            Again, we used the induction hypothesis in the second equality, as in $D_{r(2)}$, $D_{r(0)}$ and $D_{r(1)}$ $l$ is an illegal crossing of type 1. We then notice that $s(D_{r(i),l(j)})=s(D_{l(j),r(i)})$ for every $i \in \lbrace 0,1,2\rbrace$ and $j \in \lbrace\Bar{0},\Bar{1}\rbrace$, so $T_{R_1}(D_{r(i),l(j)})=T_{R_2}(D_{l(j),r(i)})$ by induction hypothesis, therefore $T_{R_1}(D)=T_{R_2}(D)$.

                \begin{figure}[H]
                \centering
                \includegraphics[scale=0.8]{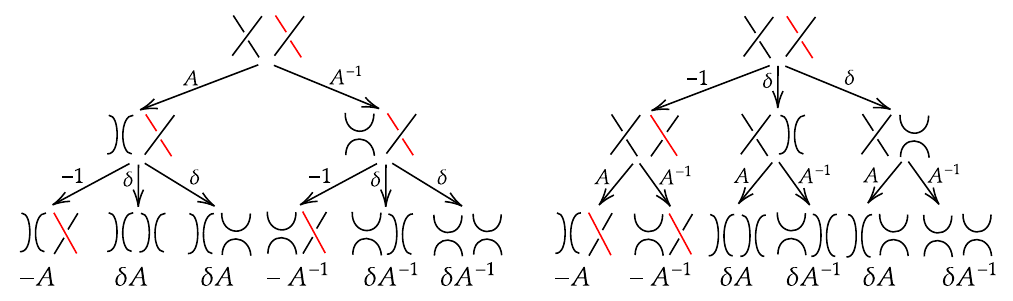}
                \caption{Resolution trees illustrating subcase (2a) in the case when the common color is the one of the upper arc of $r$. In the other case the tree is analogous.}
                \end{figure}

                \item If the arcs in $l$ and $r$ do not share any color, then the proof is analogous to that of case (2a).\\

            \end{enumerate}

            \item Crossings $r$ and $l$ are of type 2: If $r$ and $l$ do not share any color, then we can start by smoothing either crossing, and the induction hypothesis can be applied thanks to Remark~\ref{low-complexity2}. Then, we will consider the cases when the arcs of $r$ and $l$ involve two or three colors. In all cases, we will choose the crossings $l$ and $r$ so that they match the subcases of Figure~\ref{parejas}. 
            
            \begin{figure}[H]
                \centering
                \includegraphics[scale=0.75]{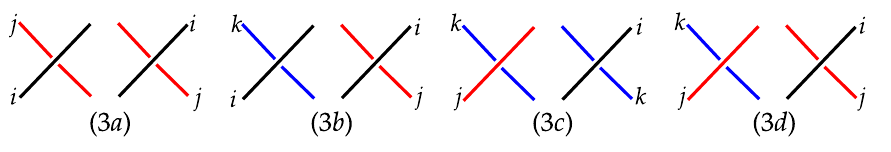}
                \caption{Pairs of colors of crossings $l$ (left) and $r$ (right) illustrating the proof of case (3). We assume $i<j<k$.}
                \label{parejas}
                \end{figure}
            
            \begin{enumerate}

                    \item Both illegal crossing have arcs with the same colors, then:

If $R_1$ starts by smoothing the crossing $l$:

\begin{align*}
    T_{R_1}(D) &=- T_{R_1}(D_{l(2)}) + \delta T_{R_1}(D_{l(0 )}) + \delta T_{R_1}(D_{l(1)}) \\
            &= T_{R_1}(D_{l(2),r(2)}) - \delta T_{R_1}(D_{l(2),r(0)}) - \delta T_{R_1}(D_{l(2),r(1)})  \\ 
            & \ \ +\delta A T_{R_1}(D_{l(0),r(\Bar{0})}) + \delta A^{-1} T_{R_1}(D_{l(0),r(\Bar{1})}) \\
            & \ \ + \delta A T_{R_1}(D_{l(1),r(\Bar{0})}) + \delta A^{-1} T_{R_1}(D_{l(1),r(\Bar{1})})\\
            &= T_{R_1}(D_{l(2),r(2)}) - \delta A T_{R_1}(D_{l(2),r(0),l(\Bar{0})}) - \delta A^{-1} T_{R_1}(D_{l(2),r(0),l(\Bar{1})}) \\
            & \ \ - \delta A T_{R_1}(D_{l(2),r(1),l(\Bar{0})}) - \delta A^{-1} T_{R_1}(D_{l(2),r(1),l(\Bar{1})})  \\ 
            & \ \ +\delta A T_{R_1}(D_{l(0),r(\Bar{0})}) + \delta A^{-1} T_{R_1}(D_{l(0),r(\Bar{1})}) \\
            & \ \ + \delta A T_{R_1}(D_{l(1),r(\Bar{0})}) + \delta A^{-1} T_{R_1}(D_{l(1),r(\Bar{1})}) \\
            &= T_{R_1}(D_{l(2),r(2)})  - \delta A^{-1} T_{R_1}(D_{l(2),r(0),l(\Bar{1})}) - \delta A T_{R_1}(D_{l(2),r(1),l(\Bar{0})})   \\ 
            & \ \ +\delta A T_{R_1}(D_{l(0),r(\Bar{0})}) + \delta A^{-1} T_{R_1}(D_{l(1),r(\Bar{1})})
\end{align*}

We have used the induction hypothesis in the second equality, as $D_{l(0)}$, $D_{l(1)}$ and $D_{l(2)}$ have smaller complexity than $D$, we can choose to smooth the crossing $r$, which becomes an illegal crossing of type 1 in $D_{l(0)}$ and  $D_{l(1)}$. Observe that $l$ becomes and illegal crossing of type 1 in $D_{l(2),r(0)}$ and $D_{l(2),r(1)}$, and therefore we smooth it to obtain the third equality. Finally, to obtain the last expression, we note that $s(D_{l(2),r(0),l(\Bar{0})})=s(D_{l(1),r(\Bar{0})})$ and that $s(D_{l(2),r(1),l(\Bar{1})})=s(D_{l(0),r(\Bar{1})})$.

Similarly, if $R_2$ starts by smoothing the crossing $r$, we obtain the following:

\begin{align*}
    T_{R_2}(D) &=- T_{R_2}(D_{r(2)})+ \delta T_{R_2}(D_{r(0)}) + \delta T_{R_2}(D_{r(1)}) \\
            &= T_{R_2}(D_{r(2),l(2)}) - \delta T_{R_2}(D_{r(2),l(0)}) - \delta T_{R_2}(D_{r(2),l(1)}) \\
            & \ \ +\delta A T_{R_2}(D_{r(0),l(\Bar{0})}) + \delta A^{-1} T_{R_2}(D_{r(0),l(\Bar{1})}) \\
            & \ \ + \delta A T_{R_2}(D_{r(1),l(\Bar{0})}) + \delta A^{-1} T_{R_2}(D_{r(1),l(\Bar{1})}) \\
            &= T_{R_2}(D_{r(2),l(2)}) - \delta A T_{R_2}(D_{r(2),l(0),r(\Bar{0})}) - \delta A^{-1} T_{R_2}(D_{r(2),l(0),r(\Bar{1})})\\
            & \ \ - \delta A T_{R_2}(D_{r(2),l(1),r(\Bar{0})}) - \delta A^{-1} T_{R_2}(D_{r(2),l(1),r(\Bar{1})}) \\
            & \ \ +\delta A T_{R_2}(D_{r(0),l(\Bar{0})}) + \delta A^{-1} T_{R_2}(D_{r(0),l(\Bar{1})}) \\
            & \ \ + \delta A T_{R_2}(D_{r(1),l(\Bar{0})}) + \delta A^{-1} T_{R_2}(D_{r(1),l(\Bar{1})}) \\
            &= T_{R_2}(D_{r(2),l(2)})  - \delta A^{-1} T_{R_2}(D_{r(2),l(0),r(\Bar{1})})- \delta A T_{R_2}(D_{r(2),l(1),r(\Bar{0})}) \\
            & \ \ +\delta A T_{R_2}(D_{r(0),l(\Bar{0})}) + \delta A^{-1} T_{R_2}(D_{r(1),l(\Bar{1})})
\end{align*}

 We then notice that $s(D_{l(2),r(2)})=s(D_{r(2),l(2)})$, $s(D_{l(2),r(0),l(\Bar{1})})=s(D_{r(2),l(0),r(\Bar{1})})$,  $s(D_{l(2),r(1),l(\Bar{0})})=s(D_{r(2),l(1),r(\Bar{0})})$,  $s(D_{l(0),r(\Bar{0})})=s(D_{r(0),l(\Bar{0})})$ and $s(D_{l(1),r(\Bar{1})})=s(D_{r(1),l(\Bar{1})})$ , therefore $T_{R_1}(D)=T_{R_2}(D)$.

                    \begin{figure}[H]
               \centering
                \includegraphics[scale=0.8]{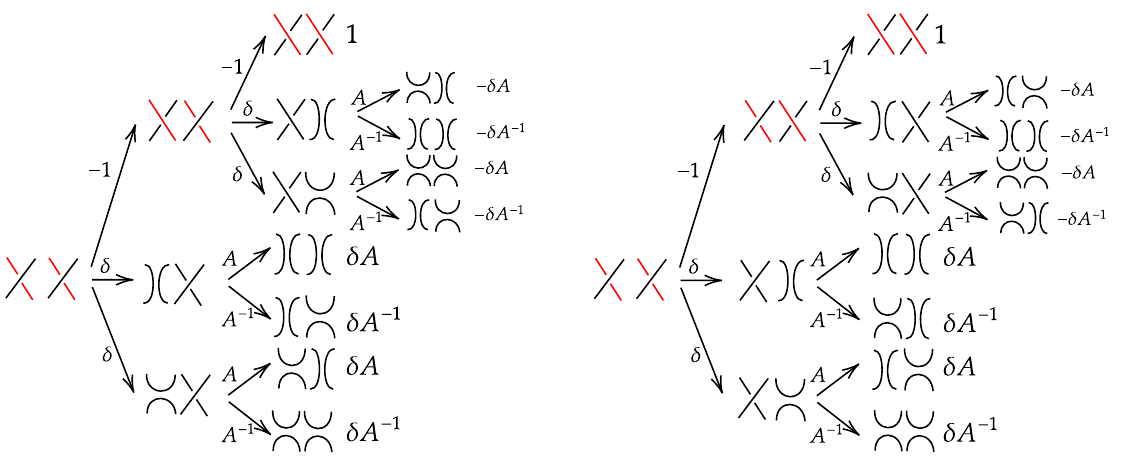}
               \caption{Resolution trees illustrating subcase (3a).}
                \end{figure}
                
                    \item The upper arcs of $r$ and $l$ share the same color, while the lower arcs have different colors. If $R_1$ starts by smoothing the crossing $l$:

\begin{align*}
    T_{R_1}(D) &=- T_{R_1}(D_{l(2)}) + \delta T_{R_1}(D_{l(0)}) + \delta T_{R_1}(D_{l(1)}) \\
            &=  T_{R_1}(D_{l(2),r(2)}) - \delta T_{R_1}(D_{l(2),r(0)}) - \delta T_{R_1}(D_{l(2),r(1)})   \\
            & \ \ - \delta T_{R_1}(D_{l(0),r(2)}) + \delta^2 T_{R_1}(D_{l(0),r(0)}) + \delta^2 T_{R_1}(D_{l(0),r(1)})   \\
            & \ \ - \delta T_{R_1}(D_{l(1),r(2)}) + \delta^2 T_{R_1}(D_{l(1),r(0)}) + \delta^2 T_{R_1}(D_{l(1),r(1)}) 
\end{align*}

Observe that the arcs in crossing $r$ preserve their colors when smoothing $l$ in all possible ways. Similarly, if $R_2$ starts by smoothing the crossing $r$, we obtain the following:

            \begin{align*}
                T_{R_2}(D) &= - T_{R_2}(D_{r(2)}) + \delta T_{R_2}(D_{r(0)}) + \delta T_{R_2}(D_{r(1)})  \\
&=  T_{R_2}(D_{r(2),l(2)}) - \delta T_{R_2}(D_{r(2),l(0)}) - \delta T_{R_2}(D_{r(2),l(1)}) \\
& \ \ - \delta T_{R_2}(D_{r(0),l(2)}) +\delta^2 T_{R_2}(D_{r(0),l(0)}) + \delta^2 T_{R_2}(D_{r(0),l(1)})   \\
& \ \ - \delta T_{R_2}(D_{r(1),l(2)}) + \delta^2 T_{R_2}(D_{r(1),l(0)}) + \delta^2 T_{R_2}(D_{r(1),l(1)}) 
            \end{align*}

            %For both trees, we can notice that by smoothing the crossing $l$ (respectively $r$), the crossing $r$ (respectively $l$) is still a type 2 illegal crossing, and by the induction hypothesis, we can assume that their corresponding subtrees start by smoothing the crossing $r$ (respectively $l$). }
            We then see that $s(D_{r(i),l(j)})=s(D_{l(j),r(i)})$ for every $i,j \in \lbrace 0,1,2\rbrace$, and the induction hypothesis completes the proof. 

                    \begin{figure}[H]
                \centering
                \includegraphics[scale=0.8]{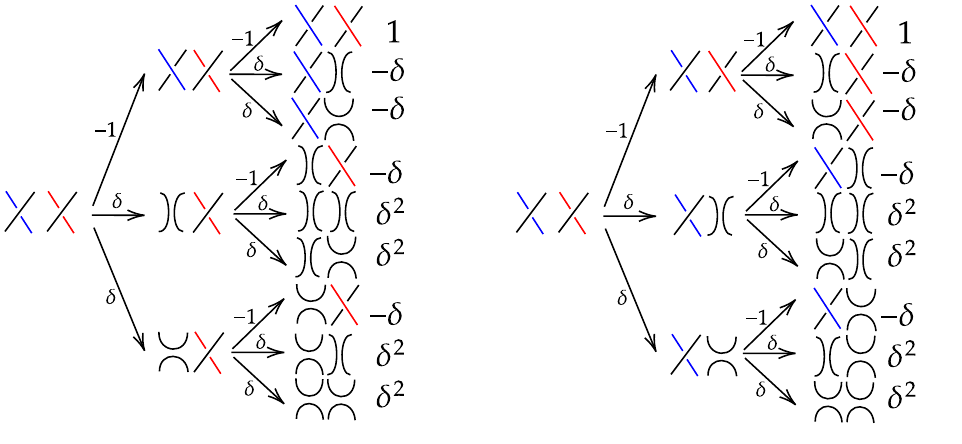}
                \caption{Resolution trees illustrating subcase (3b).}
                \end{figure}

                    \item The lower arcs of $r$ and $l$ share the same color $k$ while the upper arcs of $l$ and $r$ are colored by $j$ and $i$, with $i<j<k$:

                    If $R_1$ starts by smoothing the crossing $l$:

\begin{align*}
    T_{R_1}(D) &=- T_{R_1}(D_{l(2)}) + \delta T_{R_1}(D_{l(0)}) + \delta T_{R_1}(D_{l(1)}) \\
            &=  T_{R_1}(D_{l(2),r(2)}) - \delta T_{R_1}(D_{l(2),r(0)}) - \delta T_{R_1}(D_{l(2),r(1)})  \\
            & \ \ - \delta T_{R_1}(D_{l(0),r(2)}) +\delta^2 T_{R_1}(D_{l(0),r(0)}) + \delta^2 T_{R_1}(D_{l(0),r(1)})  \\ 
            & \ \  - \delta T_{R_1}(D_{l(1),r(2)}) + \delta^2 T_{R_1}(D_{l(1),r(0)})  + \delta^2 T_{R_1}(D_{l(1),r(1)}) 
\end{align*}

%By smoothing the crossing $l$ the crossing $r$ will be still illegal in all three summands. By the induction hypothesis, we can assume that their corresponding subtrees start by smoothing the crossing $r$.

Observe that in $D_{l(2),r(0)}$ and $D_{l(2),r(1)}$ color $k$ (blue) becomes $i$ (black), and thereforo crossing $l$ becomes illegal of type $2$, and we can smooth it to obtain:

\begin{align*}
    T_{R_1}(D) &=T_{R_1}(D_{l(2),r(2)})+ \delta T_{R_1}(D_{l(2),r(0),l(2)}) - \delta^2 T_{R_1}(D_{l(2),r(0),l(0)}) - \delta^2 T_{R_1}(D_{l(2),r(0),l(1)}) \\
            & \ \ + \delta T_{R_1}(D_{l(2),r(1),l(2)}) - \delta^2 T_{R_1}(D_{l(2),r(1),l(0)}) - \delta^2 T_{R_1}(D_{l(2),r(1),l(1)})  \\
            & \ \ - \delta T_{R_1}(D_{l(0),r(2)}) + \delta^2 T_{R_1}(D_{l(0),r(0)}) + \delta^2 T_{R_1}(D_{l(0),r(1)})  \\
            & \ \ - \delta T_{R_1}(D_{l(1),r(2)})+ \delta^2 T_{R_1}(D_{l(1),r(0)}) + \delta^2 T_{R_1}(D_{l(1),r(1)})  \\
            &= T_{R_1}(D_{l(2),r(2)}) + \delta T_{R_1}(D_{l(2),r(0),l(2)}) + \delta T_{R_1}(D_{l(2),r(1),l(2)})   \\
            & \ \ - \delta T_{R_1}(D_{l(0),r(2)}) - \delta T_{R_1}(D_{l(1),r(2)})
\end{align*}

The last equality is due to the fact that $s(D_{l(0),r(0)})=s(D_{l(2),r(0),l(1)})$, $s(D_{l(0),r(1)})=s(D_{l(2),r(1),l(1)})$, $s(D_{l(1),r(0)})=s(D_{l(2),r(0),l(0)})$, and $s(D_{l(1),r(1)})=s(D_{l(2),r(1),l(0)})$. In the same way, if $R_2$ starts by smoothing the crossing $r$, we obtain the following. 

\begin{align*}
    T_{R_2}(D) &=- T_{R_2}(D_{r(2)}) + \delta T_{R_2}(D_{r(0)}) + \delta T_{R_2}(D_{r(1)}) \\
            &=  T_{R_2}(D_{r(2),l(2)}) - \delta T_{R_2}(D_{r(2),l(0)}) - \delta T_{R_2}(D_{r(2),l(1)})   \\
            & \ \ +  \delta T_{R_2}(D_{r(0)}) + \delta T_{R_2}(D_{r(1)})
\end{align*}

We then notice that $s(D_{r(0)})=s(D_{l(2),r(0),l(2)})$, $s(D_{r(1)})=s(D_{l(2),r(1),l(2)})$ and $s(D_{r(2),l(j)})=s(D_{l(j),r(2)})$ for every $j \in \lbrace 0,1,2\rbrace$, and therefore $T_{R_1}(D)=T_{R_2}(D)$.

                    \begin{figure}[H]
                \centering
                \includegraphics[scale=0.7]{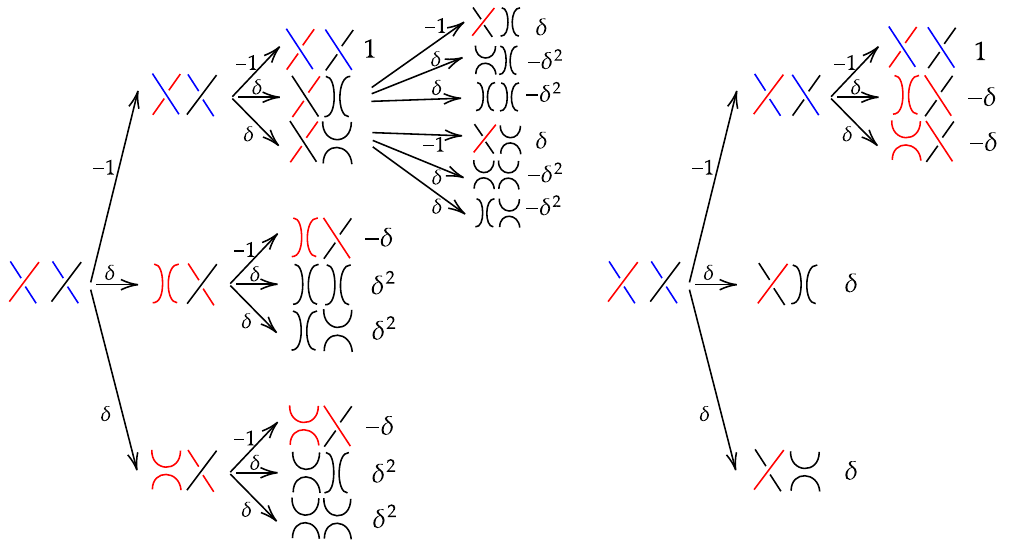}
                \caption{Resolution trees illustrating subcase (3c).} 
                \end{figure}

                    \item The proof of the case (3d) is analogous to that of case (3b).

                \end{enumerate}
            
        \end{enumerate}
\end{proof}

Thanks to the previous theorem, we know that for every tied link diagram $D$, we can associate a formal sum $$T_R(D)=\sum_{s\in S}f(s)\cdot s,$$ \noindent which does not depend on the tree $R$, so we will denote it as $T(D)$. 

\subsection{A regular isotopy invariant} We now use $T(D)$ to define a function $P$, and we show that it coincides with $\langle\langle \cdot \rangle\rangle$, thus proving that $P$ is a regular isotopy invariant.

\begin{definition}
    Let $D$ be a tied link diagram and $s$ an AJ-state of $D$. We define $$P(s)=c^{\gamma(s)-1}(-A^2-A^{-2})^{k(s)-\gamma(s)},$$ \noindent where $\gamma(s)$ denotes the number of colors of the AJ-state $s$, and $k(s)$ its number of components.
\end{definition}

For any tied link diagram $D$, we extend the definition of $P$ by setting $$P(D)=\sum_{s\in S} f(s)\,P(s),$$ \noindent where $S$ denotes the set of AJ-states of $D$.

\begin{prop}
    The polynomial $P$ coincides with $\langle\langle \cdot \rangle\rangle$, and therefore is a regular isotopy invariant.
\end{prop}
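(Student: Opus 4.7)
The plan is to verify that $P$ satisfies each of the six axioms defining $\langle\langle\cdot\rangle\rangle$ in Theorem~\ref{invariante}. By the uniqueness statement of that theorem, this forces $P=\langle\langle\cdot\rangle\rangle$, and the regular isotopy invariance of $P$ follows immediately.

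Axioms (1), (2) and (3) are checked directly from the formula $P(s)=c^{\gamma(s)-1}(-A^2-A^{-2})^{k(s)-\gamma(s)}$. The unknot is its own unique AJ-state with $k=\gamma=1$, so $P(\bigcirc)=1$. Adding a disjoint unknot of a new color to $D$ sets up a bijection between the AJ-states of $D$ and those of $D\sqcup\bigcirc$ in which both $k$ and $\gamma$ increase by one; evaluating $P(s\sqcup\bigcirc)=c\cdot P(s)$ for each AJ-state and summing with multiplicities $f(s)$ gives $P(D\sqcup\bigcirc)=c\cdot P(D)$. Adding an unknot whose color already appears increases only $k$, producing the factor $-(A^2+A^{-2})$ and hence axiom (3).

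For axioms (5) and (6), the crucial tool is Theorem~\ref{independencia}, which lets us compute $T(D)$ with a resolution tree of our choosing. To verify (5) at a same-color (hence type-1 illegal) crossing, pick a tree that starts by smoothing that crossing; the recursive definition of $T$ immediately gives $T(D)=A\,T(D_0)+A^{-1}T(D_\infty)$, and applying $P$ termwise produces the axiom. For (6) at a different-color crossing: if the crossing is type-2 illegal (so $D=D_+$), the same recursion yields $T(D_+)=\delta T(D_0)+\delta T(D_\infty)-T(D_-)$, which rearranges to the axiom after evaluation under $P$; if instead the distinguished crossing is legal (so $D=D_-$), we apply the recursion to the companion $D_+$ whose distinguished crossing is illegal, obtaining the symmetric relation between $P(D_+)$, $P(D_-)$, $P(D_0)$ and $P(D_\infty)$ demanded by (6).

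The main obstacle is axiom (4), the invariance of $P$ under Reidemeister moves R2 and R3. I plan to derive it from the axioms already verified, mirroring the classical Kauffman argument but splitting into cases according to the colors of the strands in the move. When all strands share a single color, axiom (5) reduces the check to the classical setting. When distinct colors are present, axiom (6) is the relevant tool; the merging of colors under the $0$- and $1$-smoothings makes the bookkeeping delicate, and an induction on the number of crossings is needed to close the case analysis (with the recursions at the crossings of the move expressing both sides of the move in terms of strictly simpler diagrams). Once this invariance is established, the uniqueness in Theorem~\ref{invariante} yields $P=\langle\langle\cdot\rangle\rangle$, completing the proposition.
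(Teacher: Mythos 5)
Your verifications of axioms (1)--(3), (5) and (6) are exactly the paper's argument: the bijection $s\mapsto s\sqcup\bigcirc$ (resp.\ $s\widetilde{\sqcup}\bigcirc$) with $f(s\sqcup\bigcirc)=f(s\widetilde{\sqcup}\bigcirc)=f(s)$ gives (2) and (3), and Theorem~\ref{independencia} lets you compute $T$ with a tree whose first smoothing is at the distinguished crossing, yielding $T(D_{+,\sim})=AT(D_0)+A^{-1}T(D_\infty)$ and $T(D_+)=-T(D_-)+\delta\,T(D_0)+\delta\,T(D_\infty)$, whence (5) and (6) after applying $P$ termwise. Up to this point the proposal and the paper coincide.

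The gap is your treatment of axiom (4). You correctly sense that it should be deducible from the other axioms, but what you offer is only a plan: ``mirroring the classical Kauffman argument,'' with the case analysis and the induction explicitly left undone. This is not a harmless omission, because the colored version of that argument is genuinely delicate: in an R2 configuration between strands of different colors both crossings are simultaneously legal or simultaneously illegal, and at a legal crossing axiom (6) rewrites $D_-$ in terms of a diagram $D_+$ with the \emph{same} number of crossings, so your claim that the recursions express both sides of the move ``in terms of strictly simpler diagrams'' fails for a naive induction on the crossing number; one would need the complexity order of Section~3 (or some substitute) to make it terminate, and you never set this up. The paper avoids the issue entirely: it verifies (1)--(3), (5), (6) and then invokes the fact, already established in \cite{Aicardi2018}, that axiom (4) is a consequence of the remaining axioms, after which uniqueness in Theorem~\ref{invariante} gives $P=\langle\langle\cdot\rangle\rangle$. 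To close your proof you should either cite that deduction (the shortest fix, and what the paper does) or actually carry out the R2/R3 case analysis you sketch, with the complexity-based induction made explicit; as written, the hardest step of your argument is asserted rather than proved.
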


\begin{proof}
    We will show that $P$ satisfies axioms $(1)$ to $(6)$ of Theorem~\ref{invariante} except for (4) since, as mentioned in \cite{Aicardi2018}, (4) can be deduced from the others. Let $D$ be a tied link diagram, then we have that for any AJ-state $ s $ of $ D $, it holds that the smoothings that lead from $ D $ to $ s $ are the same as those that lead $ D \sqcup \bigcirc $ and $ D \widetilde{\sqcup} \bigcirc $ to $ s $, since their sets of crossings are exactly the same as $ D $. From this it follows that $ f(s \sqcup \bigcirc) = f(s \widetilde{\sqcup} \bigcirc) = f(s) $. Moreover, if $ S $ is the set of AJ-states of $ D $, then the AJ-states of $ D \sqcup \bigcirc $ and $ D \widetilde{\sqcup} \bigcirc $ are, respectively, $ \{ s \sqcup \bigcirc \mid s \in S \} $ and $ \{ s \widetilde{\sqcup} \bigcirc \mid s \in S \} $. Now, let us verify the axioms of $ \langle\langle \cdot \rangle\rangle $.

\begin{enumerate}
    \item \begin{align*}
        P(\bigcirc)=1.
    \end{align*}

    \item \begin{align*}
        P(D \sqcup \bigcirc) &=\sum_{s\in S}f(s\sqcup \bigcirc)P(s\sqcup \bigcirc)\\ 
        &=\sum_{s\in S}f(s\sqcup \bigcirc)P(s)c\\ 
        &=c\sum_{s\in S}f(s)P(s)\\ 
        &=c\cdot P(D).
    \end{align*}
    \item \begin{align*}
        P(D \widetilde{\sqcup} \bigcirc)&=\sum_{s\in S}f(s\widetilde{\sqcup} \bigcirc)P(s\widetilde{\sqcup} \bigcirc)\\ 
        &=\sum_{s\in S}f(s\widetilde{\sqcup} \bigcirc)P(s)(-A^2-A^{-2})\\ 
        &=(-A^2-A^{-2})\sum_{s\in S}f(s)P(s)\\ 
        &=(-A^2-A^{-2})\cdot P(D).
    \end{align*}
    \setcounter{enumi}{4}
    \item Given that $T(D_{+,\sim})=AT(D_0)+A^{-1}T(D_{\infty})$, we have
    \begin{align*}
        P(D_{+,\sim})=AP(D_0)+A^{-1}P(D_{\infty}).
    \end{align*}
    \item Given that $T(D_+)=-T(D_-)+(A+A^{-1})T(D_0)+(A+A^{-1})T_{\infty}$, we have
    \begin{align*}
        P(D_+)=-P(D_-)+(A+A^{-1})P(D_0)+(A+A^{-1})P(D_{\infty}).
    \end{align*}  
\end{enumerate}
\end{proof}

\subsection{Algorithm to compute $\langle\langle\cdot\rangle\rangle$}

We provide an algorithm to compute the double bracket $\langle\langle D\rangle\rangle$ of any tied link diagram $D$. This algorithm made possible the computations appearing in the next section. It can be accessed in \cite{Algoritmo}.

\

\begin{spacing}{0.02}
\noindent
\HRule
\end{spacing}

\

\noindent \textbf{Input:} a tied link diagram $D$ with color indexed by $1,2, \ldots, m$
\begin{itemize}
\item Order the crossings of $D$ and set $P=0$, $v=D$ and create an empty list to store processed vertices.
\item For every vertex $v$ which has not been processed:
 \begin{itemize}
    \item[$\cdot$] Run through the crossings of $v$ in order.
    \item[$\cdot$] If an illegal crossing of type 2 is found, smooth it following axiom $(6)$, set $v$ as processed, add the children to the list of improcessed vertices and, for every child $w$, store the parent $(w)=v$, and the label of the edge from $v$ to $w$.
    \item[$\cdot$] Otherwise, if an illegal crossing of type 2 is found, smooth it following axion $(5)$, and set $v$ as a leaf. Compute $P(s(v))$. Compute the product $b(v)$ of the labels of the edges connecting $D$ to $v$. Add $b(v)P(s(v))$ to $P$.
    \item[$\cdot$] Otherwise, $v$ is a leaf (i.e., an AJ-state). Compute $P(s(v))$. Compute the product $b(v)$ of the edge labels joining $D$ to $v$. Add $b(v)P(s(v))$ to $P$.
\end{itemize}
\end{itemize}

\noindent \textbf{Return:} P

\ 

\begin{spacing}{0.02}
\noindent
\HRule
\end{spacing}

\ 

\ 

While Theorem~\ref{independencia} assures us that any resolution tree can be used to calculate the value of the AJ-bracket, the interest in using the one described in the algorithm lies in its ability to produce a subtree with the same root, where the leaves contain only illegal crossings of type 1. This, in turn, makes it easier to distinguish between two non-tie-isotopic tied links.

\section{Some key examples and explicit calculation of $\langle\langle\cdot\rangle\rangle$}

In \cite[Section~7]{Aicardi2018}, Aicardi and Juyumaya provide three pairs of examples (a single one if orientations are not taken into account) showing the strength of the invariant $\mathcal{J}$, the \textit{tied Jones polynomial} obtained by normalizing the AJ-bracket. More precisely, they demonstrate that $\mathcal{J}$ distinguishes pairs of links that neither the Homflypt nor the two-variable Kauffman polynomials can distinguish. These links are represented by the diagrams shown in Figure \ref{contraejemplos} with three different (pairs of) orientations.

\begin{figure}[H]
    \centering
    \includegraphics[scale=0.75]{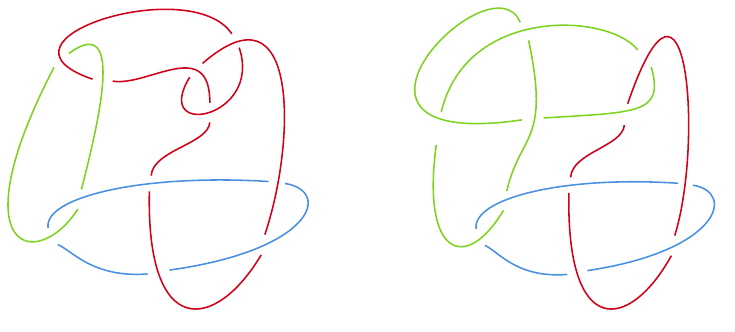}
    \caption{Diagrams of L11n304 and L11n412.}
    \label{contraejemplos}
\end{figure}

The algorithm described in the previous section that we implemented in \cite{Algoritmo} allowed us to find an inconsistency in their computations: the AJ-bracket of both diagrams coincide. More precisely, if we write $D_{L11n304}$ and $D_{L11n412}$ for the diagrams shown in Figure \ref{contraejemplos}:

\begin{align*}
    \langle\langle D_{L11n304}\rangle\rangle &= \langle\langle D_{L11n412} \rangle\rangle = A^{19} - 3A^{15} - A^{13}c + 2A^{13} + 2A^{11}c + 6A^{11} + 4A^9c \\
    & \ \ \ \ \ \ + A^7c^2 - 2A^7c - 4A^7 - A^5c - A^3c^2 + 2A^3c + 6A^3 - 2A - 4c/A \\
    & \ \ \ \ \ \ - 11/A - 5c/A^3 - 4/A^3 - c^2/A^5 + 2/A^5 - c/A^7 - 2/A^7 - 2c/A^9 \\
    & \ \ \ \ \ \ - 7/A^9 - c/A^{11} - 2/A^{11} + 3/A^{13} + c/A^{15} - 1/A^{17}
\end{align*}

In each of the pairs of tied diagrams in \cite{Aicardi2018} the writhes of both diagrams coincide, and therefore our previous computation implies that the polynomial invariant $\mathcal{J}$, does not distinguish between the associated links, that is:

\begin{equation*}
    \mathcal{J}(L11n304)=(-A)^{3w}\langle\langle D_{L11n304}\rangle\rangle=(-A)^{3w}\langle\langle D_{L11n412} \rangle\rangle=\mathcal{J}(L11n412)
\end{equation*}

So we wondered whether the invariant $\langle\langle\cdot\rangle\rangle$ is stronger than the Kauffman bracket. Subsequently, in discussions with one of the authors of the aforementioned work, we were advised to test examples found in \cite{Aicardi2020}, where authors studied the strengt of other polynomial invariants for tied links. 

The following table shows some pairs of non-equivalent classical links with three components sharing the same writhe (for both members of each pair). These examples cannot be distinguished by their Homflypt polynomial; however, the double bracket $\langle\langle\cdot\rangle\rangle$ distinguish them, and so does the tied Jones polynomial $\mathcal{J}$. For each link, we took the standard diagram appearing in \cite{linkinfo}.

\begin{table}[H]
\label{Tabla}
\centering
\begin{tblr}{
  width = \linewidth,
  colspec = {Q[200]Q[200]Q[781]},
  row{1} = {c},
  cell{2}{1} = {c},
  cell{2}{2} = {c},
  cell{2}{4} = {c},
  cell{3}{1} = {c},
  cell{3}{2} = {c},
  cell{3}{4} = {c},
  cell{4}{1} = {c},
  cell{4}{2} = {c},
  cell{4}{4} = {c},
  cell{5}{1} = {c},
  cell{5}{2} = {c},
  cell{5}{4} = {c},
  cell{6}{1} = {c},
  cell{6}{2} = {c},
  cell{6}{4} = {c},
  hlines,
  vlines,
}
$D_{1}$ & $D{2}$ & $\langle\langle D_1\rangle\rangle-\langle\langle D_2\rangle\rangle$  \\ 

L11n358\{0,1\} & L11n418\{0,0\} &  $A^{17} + A^{15} c + A^{15} + A^{13} c - A^{13} - 2 A^{11} c - A^{9} c + 2 A^{7} c - 2 A^{7} - A^{5} c - A^{5} - 3 A^{3} c - A^{3} + \frac{3 c}{A} + \frac{1}{A} + \frac{c}{A^{3}} + \frac{1}{A^{3}} - \frac{2 c}{A^{5}} + \frac{2}{A^{5}} + \frac{c}{A^{7}} + \frac{2 c}{A^{9}} - \frac{c}{A^{11}} + \frac{1}{A^{11}} - \frac{c}{A^{13}} - \frac{1}{A^{13}} - \frac{1}{A^{15}}$ \\ 

L11n358\{1,1\} & L11n418\{1,0\} &  $ A^{17} + A^{15} c + A^{15} + A^{13} c - A^{13} - 2 A^{11} c - A^{9} c + 2 A^{7} c - 2 A^{7} - A^{5} c - A^{5} - 3 A^{3} c - A^{3} + \frac{3 c}{A} + \frac{1}{A} + \frac{c}{A^{3}} + \frac{1}{A^{3}} - \frac{2 c}{A^{5}} + \frac{2}{A^{5}} + \frac{c}{A^{7}} + \frac{2 c}{A^{9}} - \frac{c}{A^{11}} + \frac{1}{A^{11}} - \frac{c}{A^{13}} - \frac{1}{A^{13}} - \frac{1}{A^{15}}$ \\ 

L11n356\{1,0\} & L11n434\{0,0\} &  $ - A^{13} - A^{11} c + 2 A^{9} + 3 A^{7} c - 3 A^{3} c - A + \frac{2 c}{A} - \frac{1}{A^{3}} - \frac{3 c}{A^{5}} + \frac{3 c}{A^{9}} + \frac{2}{A^{11}} - \frac{c}{A^{13}} - \frac{1}{A^{15}}$ \\ 

L11n325\{1,1\} & L11n424\{0,0\} & $ A^{19} + A^{17} c - A^{15} - 2 A^{13} c + 2 A^{13} + 2 A^{11} c + 2 A^{9} c - 2 A^{9} - 4 A^{7} c - A^{7} - 3 A^{5} c - 2 A^{5} + 2 A^{3} c + 3 A c - \frac{2 c}{A} + \frac{1}{A} - \frac{2 c}{A^{3}} + \frac{2}{A^{3}} + \frac{4 c}{A^{5}} + \frac{2 c}{A^{7}} + \frac{2}{A^{7}} - \frac{2 c}{A^{9}} + \frac{1}{A^{9}} - \frac{c}{A^{11}} - \frac{2}{A^{11}} - \frac{1}{A^{13}}$  \\ 

L10n79\{1,1\} & L10n95\{1,0\} &  $ - A^{18} - A^{16} c + A^{12} c - 2 A^{12} -2 A^{10} c - A^{8} c + 2 A^{6} c + A^{6} + 2 A^{4} c + 2 A^{4} + A^{2} - c + 2 + \frac{2 c}{A^{2}} + \frac{c}{A^{4}} - \frac{2 c}{A^{6}} - \frac{c}{A^{8}} - \frac{2}{A^{8}} - \frac{1}{A^{10}} $
\end{tblr}
\caption{}
\end{table}

\begin{figure}[H]
    \centering
    \includegraphics[scale=0.75]{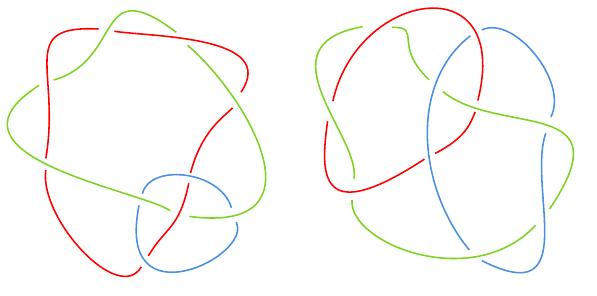}
    \caption{Diagrams of L10n79 and L10n95.}
\end{figure}

We have tried to find pairs of tied link diagrams with the same $2$-variable Kauffman polynomial and different tied Jones polynomial. To do so, we computed the value of $\langle\langle \cdot \rangle\rangle$ for all pairs of (oriented) link diagrams in \cite{linkinfo} with crossing number at most $11$ having the same Kauffman polynomial, considering the finest possible partition (that is, the number of colors equals the number of components). However, the value of $\mathcal{J}$ is the same for both members in each of those pairs. It is an open question to find pairs of links with the same $2$-variable Kauffman polynomial and different value of $\mathcal{J}$.

\section*{Acknowledgements}

The first author thanks J. González-Meneses and M. Silvero for helpful comments and corrections, as well as J. Juyumaya for providing examples for the final section of the article. O'Bryan Cárdenas-Andaur is supported by ANID, Beca Chile Doctorado en el Extranjero, Folio 72220167, and partially supported by the grant PID2020-117971GB-C21 funded by MCIN/AEI/10.13039/501100011033.

\bibliographystyle{amsplain}
\bibliography{main}{}

\end{document}